\documentclass{amsart}
\usepackage{amsfonts}

\setcounter{MaxMatrixCols}{10}

\newtheorem{theorem}{Theorem}
\theoremstyle{plain}

\newtheorem{case}{Case}

\newtheorem{corollary}{Corollary}

\newtheorem{definition}{Definition}

\numberwithin{equation}{section}

\input{tcilatex}

\begin{document}
\title[LC HELICES AND HARMONIC CURVATURES IN SPACE FORMS (HYPERSURFACE)]{LC
HELICES AND HARMONIC CURVATURES IN SPACE FORMS (HYPERSURFACE)}
\author{Ali \c{S}ENOL$^{(1)}$}
\address{Cankiri Karatekin University, Faculty of Science, Department of
Mathematics, 18100 Cankiri, Turkey}
\email{asenol@karatekin.edu.tr}
\author{Evren ZIPLAR$^{(2)}$}
\address{Ankara University, Faculty of Science, Department of Mathematics,
06100 Ankara, Turkey}
\email{evrenziplar@yahoo.com}
\author{Yusuf YAYLI$^{(3)}$}
\address{Ankara University, Faculty of Science, Department of Mathematics,
06100 Ankara, Turkey}
\email{yyayli@science.ankara.edu.tr}
\subjclass{53C040, 53A05}
\keywords{Helix; Space Form; Levi Civita Paralelism.}

\begin{abstract}
In $n$-dimensional Euclidean space $E^{n}$, harmonic curvatures of a
non-degenerate curve defined by \"{O}zdamar and Hacisaliho\u{g}lu $\left[ 4%
\right] $. In this paper, We define a new type of curves called LC helix
when the angle between tangent of this curve and LC parallel vector field in
space form is constant. Furthermore, several characterizations of these
curves by using its harmonic curvatures are obtained. Particularly, in the
3-dimensional spaceform we obtain the results $\left[ 5\right] .$
\end{abstract}

\maketitle

\section{\textbf{Introduction}}

Helices in $E^{3}$ are curves whose tangents make a constant angle with a
fixed straight line. In $1802$, Lancret proved that the necessary and
sufficient condition for a curve to be a helix is that the ratio of its
curvature be constant $[1]$. In fact, circular helix is the simplest
three-dimensional spirals. In this paper, we give some characterizations for
a non-degenerate curve $\alpha \,$to be a generalized helix by using
harmonic curvatures of the curve in $n-$dimensional Euclidean space $E^{n}.$%
\thinspace Also, we obtain a vector $D$\thinspace \thinspace for a
non-degenerate curve $\alpha \,$and we called it $a\,generalized$ $%
Darboux\,vector$. In this study, we gave the LC helix definition for the
hypersurfaces. For these helices, we found some new theorems. If we get $%
E^{n}$ as the hypersurfaces, LC helices and the generalized helices coincide
and the results $\left[ 2\right] $ are obtained.

For a$\,\,X$ \thinspace \thinspace vector field along the parameter curve of 
$\alpha :I\rightarrow E^{n}\,\,$in $E^{n},$ if%
\begin{equation*}
\overset{.}{X}=\frac{dX}{dt}=0
\end{equation*}%
then $X$ vector field is called paralell in terms of Euclidean along curve $%
\alpha .$Let $M$ be a hypersurface on $E^{n}$\thinspace and $\alpha
:I\rightarrow M\,\,\,$a paremeter curve on $M$.

if $\,\,\nabla _{v_{1}}^{X}=0$ for a differentiable $X$ \thinspace tangent
vector field to the surface $M$ along the curve $\alpha ,$ then $X$ is a
unit parallel vector field of Levi-civita meanning ($\nabla \,\,$is the
Riemann connection of the surface $M$).

\section{\textbf{Preliminaries}}

\begin{definition}
Let $M(c)\,$be space form (or Hypersurface) that has a sectional curvature $%
c\,$and $\alpha \,$is a curve in $M(c)\,\ v_{1}~$is a tangent vector field
of $\alpha $ and $X$ is a unit parallel vector field of Levi-civita meanning
as defined above 
\begin{equation*}
\left\langle v_{1},X\right\rangle =sbt=\cos \varphi
\end{equation*}%
we call this curve as $LC\,\,helix\,\,X$ is axis of a $LC\,\,helix\,$of $%
\alpha .\,$In order to characterize a curve (as \textit{inclined curves}) in 
$M\left( c\right) $ space form.
\end{definition}

We will generalize the harmonic curvature known as, 
\begin{equation*}
H_{1}=\frac{\kappa }{\tau }
\end{equation*}%
for $n=3$, to a higher order harmonic curvature for a curve in $M\left(
c\right) $ space.

\begin{definition}
\bigskip Let be all curvatures $k_{i}$ $(i=1,2,...,n-1)$ of the curve in $%
I\subset R,$ $v_{1}$ is a tangent vector field of $\alpha $ in $E^{n}$%
.Harmonic curvatures of $\alpha $ are defined by 
\begin{equation*}
H_{i}:I\rightarrow R
\end{equation*}%
\begin{equation*}
H_{i}=\left\{ 
\begin{array}{c}
\frac{\kappa _{1}}{\kappa _{2}},\,\,\,\,\,\,\,\,\,\,\,\,\,\,\,\,\,\,\,\,\,\,%
\,\,\,\,\,\,\,\,\,\,\,\,\,\,\,\,\,\,\,\,\,\,\,i=1 \\ 
\left\{ \nabla _{v_{1}}^{H_{i-1}}+H_{i-2}\kappa _{i}\right\} \frac{1}{\kappa
_{i+1}},...1\langle i\leq n-2%
\end{array}%
\right\} .
\end{equation*}
\end{definition}

Specifically, this definition for the hypersurface in $E^{n}~$coincide with
the definition of harmonic curvatures in $\left[ 2\right] .$

\section{\textbf{Harmonic curvatures and LC helices}}

In this section, we give some characterizations for LC helices by using the
harmonic curvatures of the curve.

\begin{theorem}
\bigskip Let $M(c)\,$be space form (or Hypersurface) that has a sectional
curvature $c.~$Let $\left\{ v_{1},v_{2},...v_{n}\right\} \,,\left\{
H_{1},H_{2},....H_{n-2}\right\} $ be denote the Frenet frame and the higher
ordered harmonic curvatures of the curve, respectively. Then the following
equations is holds where $X$ is axis of a LC helix $\alpha $, 
\begin{equation*}
\left\langle v_{i+2},X\right\rangle =H_{i}.\left\langle v_{1},X\right\rangle
,\,\,\,\,\,\,\,1\,\,<i\leq n-2\,\,.\,\,\,\,\,\,
\end{equation*}
\end{theorem}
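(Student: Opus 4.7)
The plan is to proceed by induction on $i$, using only the Frenet equations (with the Riemannian connection $\nabla$ of $M(c)$), the parallelism of $X$, and the recursive definition of $H_i$. The driving identity throughout is the product rule
\begin{equation*}
v_1\langle Y,X\rangle=\langle\nabla_{v_{1}}Y,X\rangle+\langle Y,\nabla_{v_{1}}X\rangle=\langle\nabla_{v_{1}}Y,X\rangle,
\end{equation*}
where the second equality uses $\nabla_{v_{1}}X=0$ (i.e.\ $X$ is Levi-Civita parallel along $\alpha$).

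First I would handle the base case by establishing two preliminary identities. Differentiating the LC-helix condition $\langle v_{1},X\rangle=\cos\varphi$ gives $\langle\nabla_{v_{1}}v_{1},X\rangle=\kappa_{1}\langle v_{2},X\rangle=0$, hence $\langle v_{2},X\rangle=0$. Differentiating this and applying the Frenet formula $\nabla_{v_{1}}v_{2}=-\kappa_{1}v_{1}+\kappa_{2}v_{3}$ yields
\begin{equation*}
\kappa_{2}\langle v_{3},X\rangle=\kappa_{1}\langle v_{1},X\rangle,
\end{equation*}
so $\langle v_{3},X\rangle=H_{1}\langle v_{1},X\rangle$. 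This is exactly the asserted equality at $i=1$ and will anchor the induction.

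For the inductive step, assume $\langle v_{j+2},X\rangle=H_{j}\langle v_{1},X\rangle$ for all $j<i$. Differentiating the hypothesis at level $i-1$, namely $\langle v_{i+1},X\rangle=H_{i-1}\langle v_{1},X\rangle$, and using $\nabla_{v_{1}}v_{i+1}=-\kappa_{i}v_{i}+\kappa_{i+1}v_{i+2}$ together with the fact that $\langle v_{1},X\rangle$ is constant, I get
\begin{equation*}
-\kappa_{i}\langle v_{i},X\rangle+\kappa_{i+1}\langle v_{i+2},X\rangle=\bigl(\nabla_{v_{1}}H_{i-1}\bigr)\langle v_{1},X\rangle.
\end{equation*}
The inductive hypothesis applied one step earlier gives $\langle v_{i},X\rangle=H_{i-2}\langle v_{1},X\rangle$, so rearranging and dividing by $\kappa_{i+1}$ produces
\begin{equation*}
\langle v_{i+2},X\rangle=\frac{\nabla_{v_{1}}H_{i-1}+\kappa_{i}H_{i-2}}{\kappa_{i+1}}\langle v_{1},X\rangle,
\end{equation*}
which matches $H_{i}\langle v_{1},X\rangle$ by the recursive definition of the harmonic curvatures.

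The main obstacle I anticipate is bookkeeping rather than conceptual difficulty: one must verify that the Frenet formulas used in $M(c)$ take the same algebraic form as in $E^{n}$ (all the extrinsic/sectional-curvature corrections are absorbed into the intrinsic $\nabla$), and one must keep the index shift between $\langle v_{i+2},X\rangle$ and $H_{i}$ consistent with the definition so that the telescoping on the right-hand side matches $H_{i}$ exactly. A minor point worth flagging is the edge case $i=2$, where the term $H_{i-2}=H_{0}$ should be read as $\langle v_{2},X\rangle/\langle v_{1},X\rangle=0$ (equivalently, the identity $\langle v_{2},X\rangle=0$ from the base step), so the recursion is consistent even at the bottom of the induction.
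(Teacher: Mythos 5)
Your proposal is correct and follows essentially the same route as the paper's own proof: the base case $\langle v_{2},X\rangle =0$, $\langle v_{3},X\rangle =H_{1}\langle v_{1},X\rangle $ obtained by differentiating the constancy of $\langle v_{1},X\rangle $, followed by an induction that differentiates $\langle v_{i+1},X\rangle =H_{i-1}\langle v_{1},X\rangle $, applies the Frenet formula and the parallelism $\nabla _{v_{1}}X=0$, and matches the result against the recursive definition of $H_{i}$. Your explicit handling of the $i=2$ edge case via $H_{0}=0$ is a small point of care that the paper leaves implicit.
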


\begin{proof}
\begin{equation*}
X=\lambda _{1}v_{1}+\lambda _{2}v_{2}+...+\lambda _{n}v_{n}
\end{equation*}%
\begin{equation*}
\left\langle X,v_{1}\right\rangle =\lambda _{1}=\cos \theta
\end{equation*}%
\begin{equation*}
\left\langle \nabla _{v_{1}}X,v_{1}\right\rangle +\left\langle X,\nabla
_{v_{1}}^{v_{1}}\right\rangle =0
\end{equation*}%
since $\nabla _{v_{1}}^{X}=0$, 
\begin{equation*}
\left\langle X,\kappa v_{2}\right\rangle =0
\end{equation*}%
then $\kappa \neq 0$ we obtain 
\begin{equation*}
\left\langle X,v_{2}\right\rangle =0
\end{equation*}%
take the derivative again 
\begin{equation*}
\left\langle \nabla _{v_{1}}^{X},v_{2}\right\rangle +\left\langle X,\nabla
_{v_{1}}^{v_{2}}\right\rangle =0
\end{equation*}%
since $\nabla _{v_{1}}^{X}=0$ , 
\begin{equation*}
\left\langle X,-\kappa _{1}v_{1}+\kappa _{2}v_{3}\right\rangle =0
\end{equation*}%
and if the following is re-arranged 
\begin{equation*}
-\kappa _{1}\left\langle X,v_{1}\right\rangle +\kappa _{2}\left\langle
X,v_{3}\right\rangle =0
\end{equation*}%
we get 
\begin{equation*}
\lambda _{3}=\frac{\kappa _{1}}{\kappa _{2}}.\cos \theta
\end{equation*}%
\begin{equation*}
H_{1}=\frac{\kappa _{1}}{\kappa _{2}}
\end{equation*}%
\begin{equation*}
\left\langle v_{3},X\right\rangle =H_{1}\cos \theta
\end{equation*}%
\begin{equation*}
\left\langle v_{3},X\right\rangle =H_{1}\left\langle v_{1},X\right\rangle
\end{equation*}%
for i=1 the proof is verified. if \thinspace we prove for induction i. Since
the theorem holds for i-1, let us prove it by induction for i%
\begin{equation*}
\left\langle v_{i+1},X\right\rangle =H_{i-1}\left\langle v_{1},X\right\rangle
\end{equation*}%
if we take the derivative, 
\begin{equation*}
\left\langle \nabla _{v_{1}}^{v_{i+1}},X\right\rangle +\left\langle \nabla
_{v_{1}}^{X},v_{i+1}\right\rangle =\nabla _{v_{1}}^{H_{i-1}}.\left\langle
v_{1},X\right\rangle
\end{equation*}%
\begin{equation*}
\left\langle -\kappa _{i}v_{i}+\kappa _{i+1}v_{i+2},X\right\rangle =\nabla
_{v_{1}}^{H_{i-1}}.\left\langle v_{1},X\right\rangle
\end{equation*}%
and re-arrange the expression, 
\begin{equation*}
-\kappa _{i}\left\langle v_{i},X\right\rangle +\kappa _{i+1}\left\langle
v_{i+2},X\right\rangle =\nabla _{v_{1}}^{H_{i-1}}.\left\langle
v_{1},X\right\rangle
\end{equation*}%
\begin{equation*}
\left\langle v_{i+2},X\right\rangle =\left\{ \kappa _{i}\left\langle
v_{i},X\right\rangle +\nabla _{v_{1}}^{H_{i-1}}.\left\langle
v_{1},X\right\rangle \frac{1}{\kappa _{i+1}}\right\} .
\end{equation*}%
\begin{equation*}
\left\langle v_{i},X\right\rangle =H_{i-2}\left\langle v_{1},X\right\rangle
\end{equation*}%
substituded, 
\begin{equation*}
\left\langle v_{i+2},X\right\rangle =\left\{ \kappa _{i}H_{i-2}+\nabla
_{v_{1}}^{H_{i-1}}.\right\} \frac{1}{\kappa _{i+1}}\left\langle
v_{1},X\right\rangle
\end{equation*}%
\begin{equation*}
\left\langle v_{i+2},X\right\rangle =H_{i}\left\langle v_{1},X\right\rangle
\end{equation*}%
Therefore the proof is verified via induction. if $M=E^{n}\subset E^{n+1}\,$%
\thinspace for $c=0\,\ \,$then. we obtain $\left[ 2\right] .$
\end{proof}

\begin{corollary}
If $X$ is axis of a LC helix $\alpha $, then we can write, 
\begin{equation*}
X=\lambda _{1}v_{1}+\lambda _{2}v_{2}+...+\lambda _{n}v_{n}\text{ }.
\end{equation*}%
for the Theorem 3, we get%
\begin{equation*}
\lambda _{i}=\left\langle X,v_{i}\right\rangle =H_{i-2}\left\langle
v_{1},X\right\rangle
\end{equation*}%
where 
\begin{equation*}
\left\langle v_{1},X\right\rangle =sbt\neq 0=\cos \varphi
\end{equation*}%
By the definition of harmonic curvature, we obtain 
\begin{equation*}
X=\cos \varphi (v_{1}+H_{1}v_{3}+...+H_{n-2}v_{n})\text{ }.
\end{equation*}%
Also, 
\begin{equation*}
D=v_{1}+H_{1}v_{3}+...+H_{n-2}v_{n}
\end{equation*}%
is axis of LC helix $\alpha .$ We find 
\begin{equation*}
X=\frac{D}{\left\Vert D\right\Vert }=\frac{v_{1}+H_{1}v_{3}+...+H_{n-2}v_{n}%
}{\left\Vert v_{1}+H_{1}v_{3}+...+H_{n-2}v_{n}\right\Vert }\text{ }.
\end{equation*}%
Here, we can easily pove that \ \textquotedblleft \thinspace $X$ \thinspace
\thinspace is a LC parallel vector field\textquotedblright $\Leftrightarrow $%
\textquotedblleft \thinspace $D\,$\thinspace \thinspace \thinspace is a LC
parallel vector field\textquotedblright .
\end{corollary}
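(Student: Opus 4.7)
The plan is to expand the LC parallel unit vector field $X$ in the Frenet frame $\{v_{1},v_{2},\ldots,v_{n}\}$ along $\alpha$ and read off the coefficients from Theorem 3. Writing $X=\sum_{i=1}^{n}\lambda_{i}v_{i}$ and using orthonormality, one has $\lambda_{i}=\langle X,v_{i}\rangle$. By the LC-helix hypothesis, $\lambda_{1}=\langle v_{1},X\rangle=\cos\varphi$, a nonzero constant.

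Next, I would note that the opening computations in the proof of Theorem 3 (differentiating $\langle X,v_{1}\rangle$ and using $\nabla_{v_{1}}X=0$) already force $\langle X,v_{2}\rangle=0$, i.e. $\lambda_{2}=0$. Combined with Theorem 3 itself, which states $\langle v_{i+2},X\rangle=H_{i}\langle v_{1},X\rangle$ for $1\leq i\leq n-2$, this yields $\lambda_{j}=H_{j-2}\cos\varphi$ for $3\leq j\leq n$. Collecting the terms produces
\begin{equation*}
X=\cos\varphi\,\bigl(v_{1}+H_{1}v_{3}+H_{2}v_{4}+\cdots+H_{n-2}v_{n}\bigr).
\end{equation*}
Setting $D:=v_{1}+H_{1}v_{3}+\cdots+H_{n-2}v_{n}$ we then have $X=(\cos\varphi)\,D$. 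Taking norms and using $\|X\|=1$ gives $\|D\|=1/|\cos\varphi|$, whence $X=\pm D/\|D\|$, with the sign fixed by the orientation of $X$.

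Finally I would address the equivalence \emph{$X$ is LC parallel} $\Leftrightarrow$ \emph{$D$ is LC parallel}. Since along $\alpha$ the identity $D=(\sec\varphi)\,X$ holds with $\sec\varphi$ a nonzero constant, the Levi-Civita derivative along $v_{1}$ satisfies $\nabla_{v_{1}}D=(\sec\varphi)\,\nabla_{v_{1}}X$, so one side vanishes precisely when the other does. I do not anticipate a genuine obstacle; the corollary is essentially bookkeeping on top of Theorem 3, and the only subtle point is the non-degeneracy $\cos\varphi\neq 0$, which is built into the definition of an LC helix (so $D$ is well-defined and nonzero and the scalar $\sec\varphi$ is a legitimate constant).
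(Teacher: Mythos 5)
Your proposal is correct and follows essentially the same route as the paper: expand $X$ in the Frenet frame, read off $\lambda_{1}=\cos\varphi$, $\lambda_{2}=0$, and $\lambda_{i}=H_{i-2}\cos\varphi$ from Theorem 3, conclude $X=\cos\varphi\,D$, and derive the parallelism equivalence from the fact that $D$ and $X$ differ by the nonzero constant $\sec\varphi$. Your explicit attention to the sign in $X=\pm D/\Vert D\Vert$ and to the nondegeneracy $\cos\varphi\neq 0$ is slightly more careful than the paper's presentation, but it is the same argument.
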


\begin{definition}
Let $M(c)\,$be space form (or Hypersurface) that has a sectional curvature $%
c.~$Let $\left\{ v_{1},v_{2},...v_{n}\right\} \,,\left\{
H_{1},H_{2},....H_{n-2}\right\} $ be denote the Frenet frame and the higher
ordered harmonic curvatures of the curve, respectively. The vector $D\,$%
defined by $\,D=v_{1}+H_{1}v_{3}+...+H_{n-2}v_{n}$ is called the generalized
Darboux vector of the curve $\alpha .$
\end{definition}

\begin{theorem}
Let $M(c)\,$be space form (or Hypersurface) that has a sectional curvature $%
c.$ Let $\left\{ v_{1},v_{2},...v_{n}\right\} \,,\left\{
H_{1},H_{2},....H_{n-2}\right\} $ be denote the Frenet frame and the higher
ordered harmonic curvatures of the curve, respectively. Then,%
\begin{equation*}
\text{\textquotedblleft }\,\,\alpha ~\text{is }LC\text{ helix%
\textquotedblright }\,\,\Leftrightarrow \,\,\text{\textquotedblleft }%
\,\,D\,\ \,\text{is }LC~\ \text{parallel~vector~field\textquotedblright .}
\end{equation*}
\end{theorem}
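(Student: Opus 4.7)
The plan is to use the explicit expression established in Corollary 1, which shows $X = \cos\varphi \cdot D$ whenever $\alpha$ is an LC helix. Since this identity forces $X$ and $D$ to differ by a nonzero constant scalar, the two parallelism properties become essentially interchangeable. I will treat the two implications separately.

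For the forward direction, I assume $\alpha$ is an LC helix with axis $X$ and set $\cos\varphi = \langle v_1, X\rangle$, a nonzero constant by definition of LC helix. Corollary 1 gives $X = \cos\varphi \cdot D$, so applying $\nabla_{v_1}$ and using $\nabla_{v_1}^X = 0$ yields
\[
\nabla_{v_1}^D = \tfrac{1}{\cos\varphi}\,\nabla_{v_1}^X = 0,
\]
so $D$ is LC parallel.

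For the converse, I assume $D$ is LC parallel and construct a candidate axis by normalizing: $X := D/\|D\|$. The preliminary step is to check that $\|D\|$ is constant along $\alpha$. This is where the metric compatibility of the Riemann connection $\nabla$ on $M(c)$ enters: since $\nabla_{v_1}^D = 0$,
\[
v_1\langle D,D\rangle = 2\langle \nabla_{v_1}^D, D\rangle = 0.
\]
With $\|D\|$ constant, $X$ is a well-defined unit vector field, and $\nabla_{v_1}^X = (1/\|D\|)\nabla_{v_1}^D = 0$, so $X$ is LC parallel. Because the Frenet frame is orthonormal, only the $v_1$-component of $D = v_1 + H_1 v_3 + \cdots + H_{n-2} v_n$ contributes to $\langle v_1, D\rangle$, giving $\langle v_1, D\rangle = 1$ and hence $\langle v_1, X\rangle = 1/\|D\|$, a nonzero constant. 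Therefore $\alpha$ is an LC helix with axis $X$.

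The only subtle point is the constancy of $\|D\|$ in the converse, which rests on metric compatibility of $\nabla$; everything else is an immediate consequence of Corollary 1 and the orthonormality of the Frenet frame.
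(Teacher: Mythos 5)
Your proof is correct and follows essentially the same route as the paper: the forward direction rests on the expansion $X=\cos \varphi \cdot D$ (which the paper re-derives in place rather than citing Corollary~1) and the converse normalizes $D$ to produce an axis. Your explicit justification that $\left\Vert D\right\Vert$ is constant via metric compatibility of $\nabla$ is a detail the paper merely asserts, so your write-up is, if anything, slightly more complete.
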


\begin{proof}
$\left( \Leftarrow \right) \,\,$if$\,\,D$ is parallel vector field then we
can $\left\Vert D\right\Vert =$constant%
\begin{equation*}
X=\frac{D}{\left\Vert D\right\Vert }=\frac{1}{\left\Vert D\right\Vert }D
\end{equation*}%
LC becomes a parallel vector field. 
\begin{equation*}
\left\langle X,v_{1}\right\rangle =\frac{1}{\left\Vert D\right\Vert }=\text{%
constant}\,\,\,\,\,\,,\,\,\,\,\,\,\left\Vert X\right\Vert =1
\end{equation*}%
$\alpha $ is a LC helix.

$\left( \Rightarrow \right) \,$Let $\alpha $ be a LC helix. $\alpha \,$%
\thinspace axis defined as 
\begin{equation*}
X=\lambda _{1}v_{1}+\lambda _{2}v_{2}+...+\lambda _{n}v_{n}\text{ .}
\end{equation*}%
\newline
For $\lambda _{1}=\left\langle v_{1},X\right\rangle =$constant$,$ there is a 
$X\,\,\,$\thinspace LC vector field where$\,\nabla _{v_{1}}^{X}=0$. 
\begin{equation*}
\left\langle \nabla _{v_{1}}^{v_{1}},X\right\rangle +\left\langle \nabla
_{v_{1}}^{X},v_{1}\right\rangle =0
\end{equation*}%
\begin{equation*}
\left\langle \kappa _{1}v_{2},X\right\rangle =0
\end{equation*}%
\begin{equation*}
\lambda _{2}=\left\langle v_{2},X\right\rangle =0
\end{equation*}%
\begin{equation*}
\left\langle X,v_{2}\right\rangle =0\text{ }.
\end{equation*}%
Taking the derivative, 
\begin{equation*}
\left\langle \nabla _{v_{1}}^{X},v_{2}\right\rangle +\left\langle X,\nabla
_{v_{1}}^{v_{2}}\right\rangle =0
\end{equation*}%
\begin{equation*}
\left\langle X,-\kappa _{1}v_{1}+\kappa _{2}v_{3}\right\rangle =0
\end{equation*}%
\begin{equation*}
\lambda _{3}=\left\langle v_{3},X\right\rangle =H_{1}\left\langle
v_{1},X\right\rangle =H_{1}\cos \varphi
\end{equation*}%
\begin{equation*}
\lambda _{i}=\left\langle v_{i},X\right\rangle =H_{i-2}\left\langle
v_{1},X\right\rangle
\end{equation*}%
and$\,\,$%
\begin{equation*}
X=\cos \varphi (v_{1}+H_{1}v_{3}+...+H_{n-2}v_{n})
\end{equation*}%
or 
\begin{equation*}
X=\cos \varphi .D\text{ .}
\end{equation*}%
Since $X$ \thinspace is \thinspace a LC vector field, $\nabla _{v_{1}}^{X}=0$%
\thinspace \thinspace is obtained and since $\cos \varphi =$ constant%
\begin{eqnarray*}
\nabla _{v_{1}}^{X} &=&\cos \varphi .\nabla _{v_{1}}^{D} \\
\nabla _{v_{1}}^{D} &=&0
\end{eqnarray*}%
$D\,$\thinspace is $LC$ parallel vector field.
\end{proof}

\begin{theorem}
Let $\alpha (s)$ be a unit speed curve in $M\left( c\right) $ space form
(hypersurface) with Frenet vectors\ $\left\{ v_{1},v_{2},...v_{n}\right\} $%
and harmonic curvatures $\left\{ H_{1},H_{2},....H_{n-2}\right\} $ \ then, 
\begin{equation*}
^{\prime \prime }\alpha \text{ is a }LC\text{ helix}"\text{ }\Rightarrow
\,\sum\limits_{i=1}^{n-2}H_{i}^{2}=\text{constant.}
\end{equation*}
\end{theorem}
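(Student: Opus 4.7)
The plan is to leverage the explicit Frenet-frame expansion of the axis vector $X$ that was established in Corollary 1 (and re-derived inside the proof of Theorem 2). Under the hypothesis that $\alpha$ is an LC helix with axis $X$, that corollary gives
\[
X = \cos\varphi\,\bigl(v_1 + H_1 v_3 + H_2 v_4 + \cdots + H_{n-2} v_n\bigr),
\]
with $\cos\varphi = \langle v_1, X\rangle$ a fixed (nonzero) constant. Once this expansion is in hand, the conclusion is essentially one orthonormality calculation away.

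First I would recall that the LC helix definition requires $X$ to be a \emph{unit} LC parallel vector field, so $\|X\| = 1$ identically along $\alpha$. Next, I would take the squared norm of the displayed expansion for $X$ and use the fact that the Frenet frame $\{v_1, v_2, \ldots, v_n\}$ is orthonormal, so that all cross terms vanish. This collapses the norm to
\[
1 = \|X\|^2 = \cos^2\varphi\,\bigl(1 + H_1^2 + H_2^2 + \cdots + H_{n-2}^2\bigr).
\]

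Finally, I would rearrange to isolate the sum of squared harmonic curvatures, obtaining
\[
\sum_{i=1}^{n-2} H_i^2 \;=\; \sec^2\varphi - 1 \;=\; \tan^2\varphi,
\]
which is constant because the helix angle $\varphi$ is constant by the very definition of an LC helix. I do not anticipate a real obstacle here: the work was already done in Corollary 1, and the present theorem is essentially a bookkeeping consequence. The only minor technical point worth flagging is that the division by $\cos\varphi$ implicit in writing $\sec^2\varphi$ is legitimate, since Corollary 1 is stated under the assumption $\langle v_1, X\rangle \neq 0$; I would include a brief remark to that effect rather than treat it as a separate step.
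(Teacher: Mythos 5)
Your proposal is correct and follows essentially the same route as the paper: the paper also expands $X$ in the orthonormal Frenet frame as $X=\cos \varphi \,v_{1}+\sum_{j=1}^{n-2}H_{j}\cos \varphi \,v_{j+2}$ (re-deriving the coefficients via its first theorem rather than citing the corollary) and then uses $\left\Vert X\right\Vert =1$ to obtain $\sum_{j=1}^{n-2}H_{j}^{2}=\tan ^{2}\varphi =$ constant. Your remark about the legitimacy of dividing by $\cos \varphi$ is a reasonable small addition but does not change the argument.
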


\begin{proof}
\textbf{\thinspace }Let $\alpha $ be a LC helix in $M$. In this case, 
\begin{equation*}
\left\langle v_{1},X\right\rangle =\cos \varphi =cons\tan t
\end{equation*}%
According to Theorem 3%
\begin{equation*}
\left\langle v_{i+2},X\right\rangle =H_{i}\left\langle v_{1},X\right\rangle
\,.
\end{equation*}%
Also, 
\begin{equation*}
\left\langle v_{1},X\right\rangle =\text{constant}
\end{equation*}%
\begin{equation*}
\left\langle \nabla _{v_{1}}^{v_{1}},X\right\rangle +\left\langle \nabla
_{v_{1}}^{X},v_{1}\right\rangle =0
\end{equation*}%
\begin{equation*}
\left\langle \kappa v_{2},X\right\rangle =0
\end{equation*}%
\begin{equation*}
\left\langle v_{2},X\right\rangle =0
\end{equation*}%
Since $\left\{ v_{1},v_{2},...v_{n}\right\} $ ortonormal system will be an
ortonormal base of $\chi (M)$.%
\begin{equation*}
X\in S_{p}\left\{ v_{1},v_{2},...v_{n}\right\} 
\end{equation*}%
or%
\begin{equation*}
X=\overset{n}{\sum_{i=1}}\left\langle v_{i},X\right\rangle v_{i}
\end{equation*}%
can be written. If $\left\langle v_{1},X\right\rangle =\cos \varphi
,\,\left\langle v_{2},X\right\rangle =0,$ and $\left\langle
v_{3},X\right\rangle =H_{1}\cos \varphi \,$\thinspace are substitued for $%
\left\langle v_{i},X\right\rangle $ values 
\begin{equation*}
X=\cos \varphi .v_{1}+\sum_{j=1}^{n-2}H_{j}.\cos \varphi .v_{j+2}
\end{equation*}%
is obtained. Since $X\in X(M)$ is a unit parallel vector field of
Levi-civita meanning and $\left\Vert X\right\Vert =1$, \thinspace then 
\begin{equation*}
\cos ^{2}\varphi .+\sum_{j=1}^{n-2}H_{j}^{2}.\cos ^{2}\varphi =1
\end{equation*}%
\begin{equation*}
\sum_{j=1}^{n-2}H_{j}^{2}=tg^{2}\varphi =cons\tan t
\end{equation*}%
\begin{equation*}
\sum_{j=1}^{n-2}H_{j}^{2}=cons\tan t\text{ .}
\end{equation*}%
If $M=E^{n}\subset E^{n+1}\,$\thinspace for $c=0\,\,$then we obtain $\left[ 2%
\right] .$
\end{proof}

\begin{theorem}
Let $M(c\,)\,$be a space form where $c$ is the sectional curve, and curve $%
\alpha :I\rightarrow M(c)$ be a n-th order regular curve. In this case, 
\begin{equation*}
\alpha \text{ curve is a LC helix }\Leftrightarrow V_{1}\left[ H_{n-2}\right]
+k_{n-1}H_{n-3}=0
\end{equation*}
\end{theorem}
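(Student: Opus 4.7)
The plan is to reduce the statement to a direct computation via Theorem 5. By Theorem 5, the curve $\alpha$ is an LC helix if and only if the generalized Darboux vector $D = v_1 + H_1 v_3 + H_2 v_4 + \cdots + H_{n-2} v_n$ is LC parallel along $\alpha$, i.e.\ $\nabla_{v_1}D = 0$. So it suffices to compute $\nabla_{v_1} D$ in the Frenet basis and show that it collapses to a single summand whose vanishing is precisely the stated condition.

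To execute the computation, I expand
\[
\nabla_{v_1} D = \nabla_{v_1} v_1 + \sum_{i=1}^{n-2}\Bigl( V_1[H_i]\, v_{i+2} + H_i \nabla_{v_1} v_{i+2}\Bigr)
\]
and substitute the Frenet formulas $\nabla_{v_1} v_1 = \kappa_1 v_2$, $\nabla_{v_1} v_j = -\kappa_{j-1} v_{j-1} + \kappa_j v_{j+1}$ for $2 \le j \le n-1$, and $\nabla_{v_1} v_n = -\kappa_{n-1} v_{n-1}$. Collecting the coefficient of each $v_j$, the coefficient of $v_2$ is $\kappa_1 - H_1\kappa_2$, which vanishes by $H_1 = \kappa_1/\kappa_2$. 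For $3 \le j \le n-1$ the coefficient of $v_j$ equals $V_1[H_{j-2}] - H_{j-1}\kappa_j + H_{j-3}\kappa_{j-1}$ (with the convention $H_0 = 0$), and this vanishes identically thanks to the recursion of Definition 2, which can be rewritten as $V_1[H_{j-2}] = H_{j-1}\kappa_j - H_{j-3}\kappa_{j-1}$.

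After these cancellations the only surviving term is the $v_n$-component, namely $V_1[H_{n-2}] + \kappa_{n-1} H_{n-3}$, so $\nabla_{v_1} D = \bigl( V_1[H_{n-2}] + \kappa_{n-1} H_{n-3}\bigr) v_n$. Since $v_n \ne 0$, the identity $\nabla_{v_1} D = 0$ is equivalent to $V_1[H_{n-2}] + \kappa_{n-1} H_{n-3} = 0$, and combining this with Theorem 5 yields both implications of the claim at once. The main obstacle in the write-up is the index bookkeeping: one must carefully track which of the three families of terms in the expansion contributes to each $v_j$, and handle the endpoints $j = n-1$ and $j = n$ separately, since the Frenet formula for $\nabla_{v_1} v_n$ truncates and produces no $v_{n+1}$ piece. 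Everything else reduces to a formal application of the Frenet equations together with the harmonic curvature recursion.
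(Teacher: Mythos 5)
Your proposal is correct and follows essentially the same route as the paper: invoke the equivalence ``$\alpha$ is an LC helix $\Leftrightarrow$ $D$ is LC parallel,'' expand $\nabla_{v_1}D$ via the Frenet equations, and use the harmonic-curvature recursion $V_1[H_{i-1}]=\kappa_{i+1}H_i-\kappa_i H_{i-2}$ to cancel every component except the $v_n$-coefficient $V_1[H_{n-2}]+\kappa_{n-1}H_{n-3}$. Your version is in fact cleaner about the index bookkeeping and the endpoint cases than the paper's term-by-term listing.
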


\begin{proof}
\begin{equation*}
D=v_{1}+H_{1}v_{3}+...+H_{n-2}v_{n}
\end{equation*}%
\begin{equation*}
V_{1}\left[ H_{i-1}\right] =-k_{i}H_{i-2}+k_{i+1}H_{i}
\end{equation*}%
yields 
\begin{eqnarray*}
\text{for\thinspace }~i &=&1,\,\,\nabla _{v_{1}}^{D}=\text{ }-k_{1}v_{1} \\
\text{ for~~}i &=&2\,\,\,\text{ }+\left( -k_{2}H_{0}+k_{3}H_{2}\right)
v_{3}+H_{1}\left( -k_{2}v_{2}+k_{3}v_{4}\right) \\
\text{for~}i &=&3\,\,\,\text{ }+\left( -k_{3}H_{1}+k_{4}H_{3}\right)
v_{4}+H_{2}\left( -k_{3}v_{3}+k_{4}v_{5}\right) \\
\text{for }i &=&4\,\,\text{ }+\left( -k_{4}H_{2}+k_{5}H_{4}\right)
v_{5}+H_{3}\left( -k_{4}v_{4}+k_{5}v_{6}\right) \\
&&\vdots \\
\text{for }i &=&n-3\,\,\,\text{ }+\left(
-k_{n-3}H_{n-5}+k_{n-2}H_{n-3}\right) v_{n-2}+H_{n-4}\left(
-k_{n-3}v_{n-3}+k_{n-2}v_{n-1}\right) \\
\text{for }i &=&n-2\,\,\,+\left( -k_{n-2}H_{n-4}+k_{n-1}H_{n-2}\right)
v_{n-1}+H_{n-3}\left( -k_{n-2}v_{n-2}+k_{n-1}v_{n}\right) \\
&&+v_{1}\left[ H_{n-2}\right] v_{n}-H_{n-2}k_{n-1}v_{n-1} \\
\nabla _{v_{1}}^{D} &=&\left( v_{1}\left[ H_{n-2}\right] +k_{n-1}H_{n-3}%
\right) v_{n}
\end{eqnarray*}%
yields, 
\begin{equation*}
``\text{ }D\,\,\,\text{is }\,LC\,\,\text{parallel vector field}"\text{ if
and only if }V_{1}\left[ H_{n-2}\right] +k_{n-1}H_{n-3}=0\text{ .}
\end{equation*}%
Since 
\begin{equation*}
\alpha \text{ curve is a LC helix .if and only if \ }D\,\,\,\text{is }%
\,LC\,\,\text{parallel vector field \ if and only if \ }V_{1}\left[ H_{n-2}%
\right] +k_{n-1}H_{n-3}=0
\end{equation*}%
proof is completed.

if $M=E^{n}\subset E^{n+1}\,$\thinspace for $c=0\,\,$then we obtain $\left[ 2%
\right] .$
\end{proof}

\begin{corollary}
For $n=2m+1~\left( \text{n is odd}\right) $ lets review 
\begin{equation*}
v_{1}\left[ H_{n-2}\right] +k_{n-1}H_{n-3}=0
\end{equation*}%
if $n=2m+1$ we can consider $k_{1},k_{2},.......k_{2m-1},k_{2m}$ curvatures.
So we have the rates 
\begin{equation*}
\frac{k_{1}}{k_{2}},\frac{k_{3}}{k_{4}},......\frac{k_{2m-3}}{k_{2m-2}},%
\frac{k_{2m-1}}{k_{2m}}
\end{equation*}%
using the definition for harmonic curvature, 
\begin{equation*}
H_{2m-2}=\frac{1}{k_{2m-1}}v_{1}\left[ H_{2m-3}\right] +\frac{k_{2m-2}}{%
k_{2m-1}}H_{2m-4}
\end{equation*}%
if we multiply the following by $k_{2m}\,$and reorganize accordingly 
\begin{eqnarray*}
k_{2m}.H_{2m-2} &=&\frac{k_{2m}}{k_{2m-1}}v_{1}\left[ H_{2m-3}\right]
+k_{2m}.\frac{k_{2m-2}}{k_{2m-1}}H_{2m-4}, \\
H_{2m-4} &=&\frac{1}{k_{2m-3}}v_{1}\left[ H_{2m-5}\right] +\frac{k_{2m-4}}{%
k_{2m-3}}H_{2m-6}
\end{eqnarray*}%
Multiply the following $k_{2m}\frac{k_{2m-2}}{k_{2m-1}}\,$, 
\begin{eqnarray*}
k_{2m}.\frac{k_{2m-2}}{k_{2m-1}}.H_{2m-4} &=&\frac{k_{2m}}{k_{2m-3}}.\frac{%
k_{2m-2}}{k_{2m-1}}v_{1}\left[ H_{2m-5}\right] +k_{2m}.\frac{k_{2m-2}}{%
k_{2m-1}}\frac{k_{2m-4}}{k_{2m-3}}H_{2m-6} \\
H_{2m-6} &=&\frac{1}{k_{2m-5}}v_{1}\left[ H_{2m-7}\right] +\frac{k_{2m-6}}{%
k_{2m-5}}H_{2m-8}\text{ .}
\end{eqnarray*}%
Multiply the following $k_{2m-4}\frac{k_{2m}}{k_{2m-1}}.\frac{k_{2m-2}}{%
k_{2m-3}}$, 
\begin{eqnarray*}
k_{2m-4}.\frac{k_{2m}}{k_{2m-1}}.\frac{k_{2m-2}}{k_{2m-3}}H_{2m-6} &=&\frac{%
k_{2m}}{k_{2m-1}}.\frac{k_{2m-2}}{k_{2m-3}}.\frac{k_{2m-4}}{k_{2m-5}}v_{1}%
\left[ H_{2m-7}\right]  \\
&&+k_{2m-4}.\frac{k_{2m}}{k_{2m-1}}.\frac{k_{2m-2}}{k_{2m-3}}\frac{k_{2m-6}}{%
k_{2m-5}}H_{2m-8} \\
&&\vdots  \\
H_{6} &=&\frac{1}{k_{7}}v_{1}\left[ H_{5}\right] +\frac{k_{6}}{k_{7}}H_{4} \\
H_{4} &=&\frac{1}{k_{5}}v_{1}\left[ H_{3}\right] +\frac{k_{4}}{k_{5}}H_{2}%
\text{ .}
\end{eqnarray*}%
We obtain the equalities above. If we substitude these into the equation
above. we get 
\begin{eqnarray*}
&&v_{1}\left[ H_{2m-1}\right] +\frac{k_{2m}}{k_{2m-1}}v_{1}\left[ H_{2m-3}%
\right] +\frac{k_{2m}}{k_{2m-3}}.\frac{k_{2m-2}}{k_{2m-1}}v_{1}\left[
H_{2m-5}\right] + \\
&&\frac{k_{2m}}{k_{2m-1}}.\frac{k_{2m-2}}{k_{2m-3}}.\frac{k_{2m-4}}{k_{2m-5}}%
v_{1}\left[ H_{2m-7}\right] +...+ \\
&&\frac{k_{2m}}{k_{2m-1}}.\frac{k_{2m-2}}{k_{2m-3}}.\frac{k_{2m-4}}{k_{2m-5}}%
...\frac{k_{6}}{k_{5}}v_{1}\left[ H_{5}\right] +...+ \\
&&\frac{k_{2m}}{k_{2m-1}}.\frac{k_{2m-2}}{k_{2m-3}}.\frac{k_{2m-4}}{k_{2m-5}}%
...\frac{k_{6}}{k_{5}}.\frac{k_{4}}{k_{3}}v_{1}\left[ H_{3}\right] +...+
\end{eqnarray*}%
\begin{equation*}
+\frac{k_{2m}}{k_{2m-1}}.\frac{k_{2m-2}}{k_{2m-3}}.\frac{k_{2m-4}}{k_{2m-5}}%
...\frac{k_{8}}{k_{7}}.\frac{k_{6}}{k_{5}}...H_{2}=0
\end{equation*}%
Here, $H_{2}=\frac{1}{k_{3}}.\left( \frac{k_{1}}{k_{2}}\right) ^{\prime }$
so final equation is, 
\begin{eqnarray*}
&&v_{1}\left[ H_{2m-1}\right] +\frac{k_{2m}}{k_{2m-1}}v_{1}\left[ H_{2m-3}%
\right] +\frac{k_{2m}}{k_{2m-3}}.\frac{k_{2m-2}}{k_{2m-1}}v_{1}\left[
H_{2m-5}\right]  \\
&&+\frac{k_{2m}}{k_{2m-1}}.\frac{k_{2m-2}}{k_{2m-3}}.\frac{k_{2m-4}}{k_{2m-5}%
}v_{1}\left[ H_{2m-7}\right]  \\
&&+......+\frac{k_{2m}}{k_{2m-1}}.\frac{k_{2m-2}}{k_{2m-3}}.\frac{k_{2m-4}}{%
k_{2m-5}}......\frac{k_{6}}{k_{5}}v_{1}\left[ H_{5}\right]  \\
&&%
\begin{array}{l}
+....+\dfrac{k_{2m}}{k_{2m-1}}.\dfrac{k_{2m-2}}{k_{2m-3}}.\dfrac{k_{2m-4}}{%
k_{2m-5}}......\dfrac{k_{6}}{k_{5}}.\dfrac{k_{4}}{k_{3}}\left( \dfrac{k_{1}}{%
k_{2}}\right) ^{\prime }=0%
\end{array}%
.
\end{eqnarray*}
\end{corollary}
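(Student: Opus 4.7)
The plan is to prove the corollary by iteratively applying the defining recursion for the harmonic curvatures to the LC helix characterization derived in the preceding theorem. Specializing $v_{1}[H_{n-2}] + k_{n-1}H_{n-3} = 0$ to $n = 2m+1$ yields the starting equation
\begin{equation*}
v_{1}[H_{2m-1}] + k_{2m} H_{2m-2} = 0,
\end{equation*}
so the task reduces to re-expressing $H_{2m-2}$ entirely in terms of $v_{1}$-derivatives of odd-indexed harmonic curvatures plus, at the bottom of the descent, the explicit quantity $(k_{1}/k_{2})'$.

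The engine of the reduction is the definition $H_{i} = \tfrac{1}{k_{i+1}}\bigl(v_{1}[H_{i-1}] + k_{i}H_{i-2}\bigr)$ read at even index $i = 2j$, namely $H_{2j-2} = \tfrac{1}{k_{2j-1}} v_{1}[H_{2j-3}] + \tfrac{k_{2j-2}}{k_{2j-1}} H_{2j-4}$. Applied to $H_{2m-2}$ and substituted into the starting equation, it splits off a term $\tfrac{k_{2m}}{k_{2m-1}} v_{1}[H_{2m-3}]$ and leaves behind a multiple of $H_{2m-4}$. Iterating the substitution on $H_{2m-4}, H_{2m-6}, \ldots, H_{4}$, each step peels off one further $v_{1}[H_{2j-1}]$ whose coefficient acquires an additional ratio $k_{2j+2}/k_{2j+1}$ in a growing telescoping product, until the descent reaches the minimal even-indexed harmonic curvature $H_{2}$, whose closed form $H_{2} = \tfrac{1}{k_{3}}(k_{1}/k_{2})'$ supplies the final summand of the displayed identity.

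The principal difficulty is purely combinatorial: confirming that after $m - j$ substitutions the coefficient in front of $v_{1}[H_{2j-1}]$ is exactly the telescoping product $\prod_{s=j+1}^{m} (k_{2s}/k_{2s-1})$ appearing in the statement. I would organize this as a downward induction on $j$, carrying the invariant that after substituting out $H_{2m-2}, H_{2m-4}, \ldots, H_{2j}$, the equation reads as a sum of the already-extracted $v_{1}[H_{2r-1}]$ terms (each with its telescoping coefficient) plus a single residual term proportional to $H_{2j-2}$ with the appropriately accumulated prefactor. The inductive step is a single application of the recursion, and the terminating step $j = 1$ is precisely the substitution of $H_{2}$ by $\tfrac{1}{k_{3}}(k_{1}/k_{2})'$, which both closes the induction and produces the last summand of the claimed identity.
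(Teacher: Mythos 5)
Your proposal matches the paper's own derivation: both specialize the LC helix criterion to $v_{1}[H_{2m-1}]+k_{2m}H_{2m-2}=0$ and then repeatedly substitute the even-indexed harmonic curvature recursion $H_{2j-2}=\tfrac{1}{k_{2j-1}}v_{1}[H_{2j-3}]+\tfrac{k_{2j-2}}{k_{2j-1}}H_{2j-4}$, accumulating the telescoping coefficients until $H_{2}=\tfrac{1}{k_{3}}(k_{1}/k_{2})'$ closes the descent. Your downward induction merely formalizes the bookkeeping the paper carries out term by term, so the approach is essentially identical and correct.
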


\begin{theorem}
Let $M(c\,)\,$be a space form where $c$ is the sectional curve, and if, for
n-th order regular curve $\alpha :I\rightarrow M(c)$, $\frac{k_{1}}{k_{2}},%
\frac{k_{3}}{k_{4}},...\frac{k_{2m-3}}{k_{2m-2}},\frac{k_{2m-1}}{k_{2m}}$
\thinspace ratios are constant then 
\begin{eqnarray*}
H_{2i} &=&0 \\
H_{2i+1} &=&\frac{k_{2i+1}}{k_{2i+2}}.\frac{k_{2i-1}}{k_{2i}}.\frac{k_{2i-3}%
}{k_{2i-2}}...\frac{k_{3}}{k_{4}}.\frac{k_{1}}{k_{2}}
\end{eqnarray*}%
$i=1,2,...,m$ .
\end{theorem}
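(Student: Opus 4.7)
The plan is a straightforward induction on the index $i$, driven by the recursive definition of the harmonic curvatures from Definition 2, namely
\begin{equation*}
H_{i}=\frac{1}{k_{i+1}}\bigl(v_{1}[H_{i-1}]+H_{i-2}\,k_{i}\bigr),\qquad i\geq 2,
\end{equation*}
together with the convention $H_{0}=0$. This convention is forced by the proof of Theorem 3, where the relation $\langle v_{2},X\rangle =H_{0}\langle v_{1},X\rangle$ must reconcile with the identity $\langle v_{2},X\rangle =0$ that is derived there.

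First I would dispose of the base cases. By definition $H_{1}=k_{1}/k_{2}$, which matches the claimed odd formula for $i=0$. Feeding $i=2$ into the recursion gives $H_{2}=v_{1}[k_{1}/k_{2}]/k_{3}$, and this vanishes by the standing hypothesis that $k_{1}/k_{2}$ is constant, establishing the even case at the starting index.

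For the inductive step, I would assume that for some index $i$ one has $H_{2i}=0$ together with
\begin{equation*}
H_{2i+1}=\frac{k_{1}}{k_{2}}\cdot\frac{k_{3}}{k_{4}}\cdots\frac{k_{2i+1}}{k_{2i+2}},
\end{equation*}
and then propagate the two identities to the indices $2i+2$ and $2i+3$. Since the inductive expression for $H_{2i+1}$ is a product of ratios that are constant by hypothesis, $v_{1}[H_{2i+1}]=0$, and combined with $H_{2i}=0$ the recursion yields $H_{2i+2}=0$ directly. For $H_{2i+3}$, the vanishing of $H_{2i+2}$ kills the derivative term in the recursion, leaving $H_{2i+3}=H_{2i+1}\cdot k_{2i+3}/k_{2i+4}$; multiplying the inductive form of $H_{2i+1}$ by the (constant) factor $k_{2i+3}/k_{2i+4}$ then reproduces exactly the claimed product formula.

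I do not anticipate any genuine obstacle here; the argument is a clean two-step induction in which each case is settled by a single invocation of the recursion. The only delicate point worth flagging is pinning down $H_{0}=0$ at the outset, since this value is required to give meaning to the $i=2$ case of the recursion but is not stated explicitly in Definition 2.
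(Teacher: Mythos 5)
Your proposal is correct and follows essentially the same route as the paper: a two-step induction on $i$ in which the recursion $H_{i}=\frac{1}{k_{i+1}}\left(v_{1}[H_{i-1}]+H_{i-2}k_{i}\right)$ first forces $H_{2i+2}=0$ (the derivative term vanishes because $H_{2i+1}$ is a product of constant ratios) and then yields $H_{2i+3}=\frac{k_{2i+3}}{k_{2i+4}}H_{2i+1}$. Your explicit remark that the convention $H_{0}=0$ must be supplied is a useful clarification the paper leaves implicit, but it does not change the argument.
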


\begin{proof}
By induction 
\begin{equation*}
\text{for\thinspace \thinspace }i=1\text{,\thinspace }H_{2}=\frac{1}{k_{3}}%
.\left( \frac{k_{1}}{k_{2}}\right) ^{^{\prime }}=0
\end{equation*}%
we get 
\begin{equation*}
H_{3}=\frac{1}{k_{4}}v_{1}\left[ H_{2}\right] +\frac{k_{3}}{k_{4}}H_{1}
\end{equation*}%
\begin{equation*}
H_{3}=\frac{k_{3}}{k_{4}}.\frac{k_{1}}{k_{2}}
\end{equation*}%
varifies\thinspace \thinspace for\thinspace \thinspace $i=1$.\thinspace

if it verifies for $i=p$ in general 
\begin{equation*}
H_{2p}=0
\end{equation*}%
\begin{equation*}
H_{2p+1}=\frac{k_{2p+1}}{k_{2p+2}}.\frac{k_{2p-1}}{k_{2p}}.\frac{k_{2p-3}}{%
k_{2p-2}}...\frac{k_{3}}{k_{4}}.\frac{k_{1}}{k_{2}}
\end{equation*}%
Therefore, 
\begin{equation*}
H_{2p+2}=\frac{1}{k_{2p+3}}v_{1}\left[ H_{2p+1}\right] +\frac{k_{2p+2}}{%
k_{2p+3}}.H_{2p}=0
\end{equation*}%
and we obtain 
\begin{equation*}
H_{2p+3}=\frac{1}{k_{2p+4}}v_{1}\left[ H_{2p+2}\right] +\frac{k_{2p+3}}{%
k_{2p+4}}.H_{2p+1}
\end{equation*}%
\begin{equation*}
H_{2p+3}=\frac{k_{2p+3}}{k_{2p+4}}.\frac{k_{2p+1}}{k_{2p+2}}.\frac{k_{2p-1}}{%
k_{2p}}.\frac{k_{2p-3}}{k_{2p-2}}...\frac{k_{3}}{k_{4}}.\frac{k_{1}}{k_{2}}
\end{equation*}
\end{proof}

Therefore it is proved that it varifies for $i=p+1$.

\begin{case}
As a result of Theorem 10 if%
\begin{equation*}
\frac{k_{1}}{k_{2}},\frac{k_{3}}{k_{4}},...,\frac{k_{2m-3}}{k_{2m-2}},\frac{%
k_{2m-1}}{k_{2m}}
\end{equation*}%
ratio are constant we can say that $\alpha \,\,\,$is a LC helix in the sense
of Hayden $[5]$.
\end{case}

\begin{case}
In case of $\dfrac{k_{1}}{k_{2}},\dfrac{k_{3}}{k_{4}},...,\dfrac{k_{2m-3}}{%
k_{2m-2}},\dfrac{k_{2m-1}}{k_{2m}}$\thinspace \thinspace ratios being
constant, we can find a $D$ vector which makes the same angle with all
Frenet vectors. $D$ vector as $%
D=v_{1}+H_{1}v_{3}+H_{3}v_{5}+...+H_{2n-1}v_{2n+1}$ is called as the axis of
LC helix.
\end{case}

\section{\textbf{Conclusion}}

Recently, a new type of helix, called slant helix, is studied $[3]$. In our
study, we define another type of helix in the space form, named $LC$ helix.
One of significance of this helix is that if we take the Euclidean space $%
E^{3}$ in the place of space form, the definition of $LC$ helix coincides
with the definition of helix in $E^{3}$.

\bigskip

\end{document}